\newtheorem{tm}{Theorem}
\newtheorem{defi}{Definition}
\newtheorem{rem}{Remark}
\newtheorem{rems}{Remarks}
\newtheorem{lm}{Lemma}
\newtheorem{ex}{Example}
\newtheorem{cor}{Corollary}
\newtheorem{prop}{Proposition}
\newtheorem{nota}{Notation}
\begin{document}

\title{Hyperbolic polynomials and canonical sign patterns}
\author{Vladimir Petrov Kostov}
\address{Universit\'e C\^ote d’Azur, CNRS, LJAD, France} 
\email{vladimir.kostov@unice.fr}

\begin{abstract}
  A real univariate polynomial is hyperbolic if all its roots are 
  real. By Descartes' rule of signs a hyperbolic polynomial
  (HP) with all coefficients nonvanishing
  has exactly $c$ positive and exactly $p$ negative roots counted with
  multiplicity, where $c$ and $p$ are the numbers of sign changes and sign
  preservations in the sequence of its coefficients. We
  discuss the question: If the moduli of all $c+p$ roots are
  distinct and ordered on the positive half-axis, then at which positions
  can the $p$ moduli of negative roots be depending on the positions of the
  positive and negative signs of the coefficients of the polynomial?
  We are especially interested in the choices of 
  these signs for which exactly one order of the moduli
  of the roots is possible.\\ 

  {\bf Key words:} real polynomial in one variable; hyperbolic polynomial; sign
  pattern; Descartes' 
rule of signs\\ 

{\bf AMS classification:} 26C10; 30C15
\end{abstract}
\maketitle

\section{Introduction}

We consider real univariate polynomials with nonvanishing coefficients. Such a
polynomial is {\em hyperbolic} if all its roots are real. Various problems
concerning hyperbolic polynomials (HPs) are exposed in~\cite{Ko}.
In this paper we
discuss the following question: Suppose that the moduli of all roots of a HP
are distinct and ordered on the positive half-axis. Then at which positions
can the moduli of the negative roots be depending on the signs of the
coefficients of the HP? In this sense we say that we are interested in the
possible orders on the positive half-axis of the moduli of roots of
HPs with given signs of their coefficients. 

Without loss of generality we consider only monic polynomials. 
A {\em sign pattern (SP)} is a finite sequence of $(+)$- and/or $(-)$-signs.
The SP defined by the polynomial $P:=\sum _{j=0}^da_jx^j$,
$a_j\in \mathbb{R}^*$, $a_d=1$, is the vector

$$\sigma (P)~:=~(~+~,~{\rm sgn}(a_{d-1})~,~{\rm sgn}(a_{d-2})~,~\ldots ~,~
{\rm sgn}(a_0)~)~.$$

\begin{nota}\label{nota1}
{\rm When we write
  $\sigma (P)=\Sigma _{m_1,m_2,\ldots ,m_s}$, $m_i\in \mathbb{N}^*$,
  $m_1+\cdots +m_s=d+1$,
this means that the SP $\sigma (P)$ begins with 
a sequence of $m_1$ signs $+$ followed by a sequence of $m_2$ signs
$-$ followed by a sequence of $m_3$ signs $+$ etc. The number $s-1$ is the
number of sign changes and the number $d-s+1$ is the number of sign
preservations of the SP $\sigma (P)$.
}
\end{nota}

The classical Descartes' rule of signs
says that the polynomial $P$ has not more than $s-1$ positive roots.
When applied
to the polynomial $P(-x)$, this rule implies that $P$ has not more than $d-s+1$
negative roots. Hence if $P$ is hyperbolic, then it has exactly $s-1$ positive
and exactly $d-s+1$ negative roots (all roots are counted with multiplicity).

\begin{rem}
  {\rm Fourier has made Descartes' rule of signs about real
    (but not necessarily hyperbolic) polynomials more precise by showing that
    the number of positive roots differs from $s-1$ by an even integer,
    see~\cite{Four}.
    For such polynomials, Descartes' rule
    of signs proposes only necessary conditions. Attempts to clarify the
    question how far from sufficient they are have been carried out in
    \cite{AlFu}, \cite{AJS}, \cite{FoKoSh}, \cite{FoNoSh}, \cite{Gr},
         \cite{KoCzMJ} and~\cite{KoMB}.}
  \end{rem}

\begin{defi}\label{defiCPP}
  {\rm Given a SP (of length $d+1$ and beginning with $+$) 
    we construct its corresponding
    {\em change-preservation pattern (CPP)} (of length~$d$) as follows.
    For $j\geq 2$, there is a $p$ (resp. a $c$)
    in position $j-1$ of the CPP if in positions $j-1$ and $j$ of the SP there
    are two equal (resp. two different) signs. It is clear that the
    correspondence between SPs beginning with $+$ and CPPs is bijective.
    Example: for $d=6$,
    to the SP $\sigma _0:=(+,+,-,-,+,+,+)$ there corresponds the CPP
    $(p,c,p,c,p,p)$.}
\end{defi}

\begin{defi}\label{defiCO}
  {\rm (1) Suppose that a degree $d$ HP $P$
    is given which defines the SP $\sigma$ of length $d+1$,
    suppose that the moduli of its roots are ordered on the real positive
    half-line, and suppose that all moduli of roots are distinct. We define
    formally the {\em canonical order} of the moduli of roots like this:
    the CPP corresponding to the given SP $\sigma$ is read from the back,
    each $p$ is replaced by an $N$ and
    each $c$ by a $P$. For the SP
    $\sigma _0$ from Definition~\ref{defiCPP} this gives $(N,N,P,N,P,N)$ which
    means that the moduli of the roots are $0<\gamma _1<\cdots <\gamma _6$,
    where the polynomial has positive roots $\gamma _3$ and $\gamma _5$, and
    negative roots $-\gamma _1$, $-\gamma _2$, $-\gamma _4$ and~$-\gamma _6$.
    \vspace{1mm}
    
    (2) For a HP $P$ and the SP $\sigma (P)$,
    we say that the SP $\sigma (P)$ is {\em realizable} by~$P$.}
\end{defi}

\begin{prop}\label{propevery}
  Every SP $\sigma$ of length $d+1$, $d\geq 1$, is realizable by a degree $d$
  HP with canonical order of the moduli of its roots.
\end{prop}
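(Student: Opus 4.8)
The plan is to realize $\sigma$ by a product of real linear factors whose roots have widely separated absolute values, so that the sign of each coefficient is controlled by a single dominant monomial of the corresponding elementary symmetric function. Write $P(x)=\prod_{i=1}^d(x-r_i)=\sum_{k=0}^d(-1)^ke_k x^{d-k}$, where $e_k=e_k(r_1,\dots,r_d)$ is the $k$-th elementary symmetric function; thus $a_{d-k}=(-1)^ke_k$. Suppose $0<|r_1|<\cdots<|r_d|$ with the consecutive ratios $|r_{i+1}|/|r_i|$ large enough. Then in $e_k=\sum_{|S|=k}\prod_{i\in S}r_i$ the monomial $r_{d-k+1}r_{d-k+2}\cdots r_d$ (the product of the $k$ roots of largest modulus) outweighs in absolute value the sum of all the other monomials, so ${\rm sgn}(a_{d-k})=(-1)^k\,{\rm sgn}(r_{d-k+1})\cdots{\rm sgn}(r_d)$ (for $k=0$ this is the empty product, consistent with $a_d=1$).

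From this formula one reads off the rule: the consecutive coefficients $a_{d-k+1}$ and $a_{d-k}$ present a sign change precisely when $r_{d-k+1}>0$ and a sign preservation precisely when $r_{d-k+1}<0$, because ${\rm sgn}(a_{d-k})/{\rm sgn}(a_{d-k+1})=-{\rm sgn}(r_{d-k+1})$. In the language of Definition~\ref{defiCPP} this means that entry $k$ of the CPP of such a $P$ equals $c$ iff $r_{d-k+1}>0$; reading this CPP from the back as in Definition~\ref{defiCO}, entry $\ell$ of the resulting word is $P$ iff $r_\ell>0$, i.e.\ iff the root of $\ell$-th smallest modulus (which is $\pm r_\ell$) is positive. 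Hence every such $P$ automatically has canonical order of the moduli of its roots, and its SP is entirely determined by the sequence $({\rm sgn}(r_1),\dots,{\rm sgn}(r_d))$.

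It remains to choose the signs. Given the target SP $\sigma$, form its CPP $\pi$, and for $i=1,\dots,d$ set $\varepsilon_i:=+1$ if the entry of $\pi$ in position $d+1-i$ is $c$ and $\varepsilon_i:=-1$ otherwise. Put $r_i:=\varepsilon_i t^{\,i}$ with a real parameter $t>1$, and let $P_t(x):=\prod_{i=1}^d(x-r_i)$. Then $P_t$ is hyperbolic and monic, its root moduli $t<t^2<\cdots<t^d$ are distinct, and $e_k(r_1,\dots,r_d)=\sum_{|S|=k}\bigl(\prod_{i\in S}\varepsilon_i\bigr)t^{\sum_{i\in S}i}$, where the exponent $\sum_{i\in S}i$ is uniquely maximized over $k$-subsets $S\subseteq\{1,\dots,d\}$ by $S=\{d-k+1,\dots,d\}$. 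Therefore for $t$ large enough (finitely many inequalities, one per $k=1,\dots,d$) each $a_{d-k}$ is nonzero with sign $(-1)^k\varepsilon_{d-k+1}\cdots\varepsilon_d$; by the rule above the CPP of $P_t$ then coincides with $\pi$, whence $\sigma(P_t)=\sigma$ by the bijectivity of the SP--CPP correspondence, and $P_t$ has canonical order of its root moduli.

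The only point that is not purely formal is the domination estimate --- making precise how widely separated the $|r_i|$ must be so that in each $e_k$ the leading monomial beats the sum of the rest. With the choice $r_i=\varepsilon_i t^i$ this reduces to the elementary observation that $\{d-k+1,\dots,d\}$ is the unique $k$-element subset of $\{1,\dots,d\}$ of maximal sum, and it is then disposed of by letting $t\to+\infty$. An essentially equivalent alternative is induction on $d$: append to a degree $d-1$ realization a factor $x-r_d$ with $|r_d|$ small, which appends one sign to the SP and inserts the new root as the one of smallest modulus while preserving canonicity; the direct construction above seems more transparent.
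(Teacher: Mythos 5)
Your proof is correct, and it reaches the same conclusion by a genuinely different (non-inductive) route. The paper builds the polynomial iteratively: at each step it multiplies by $(x\pm\varepsilon)$ with $\varepsilon$ smaller than all existing root moduli and small enough, by continuity, to preserve the signs of the coefficients already fixed --- the ``concatenation'' technique that is then reused in the proofs of Theorems~\ref{tm2} and~\ref{tm3}. You instead make a single explicit choice $r_i=\varepsilon_i t^i$ and control all coefficient signs at once by identifying the dominant monomial of each elementary symmetric function $e_k$, namely the product of the $k$ roots of largest modulus, which dominates because $\{d-k+1,\dots,d\}$ is the unique $k$-subset of $\{1,\dots,d\}$ of maximal index sum. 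Your sign bookkeeping checks out: ${\rm sgn}(a_{d-k})/{\rm sgn}(a_{d-k+1})=-{\rm sgn}(r_{d-k+1})$ says that entry $k$ of the CPP is a $c$ exactly when the root of $(d-k+1)$-st smallest modulus is positive, which after reversing the CPP is precisely the canonical order of Definition~\ref{defiCO}; and your choice of $\varepsilon_i$ matches the target CPP under this dictionary. What your version buys is a self-contained, fully explicit construction whose only analytic content is finitely many inequalities in $t$; what the paper's iterative version buys is a reusable building block, since later proofs concatenate new roots onto polynomials that are not products of widely separated linear factors, a situation your one-shot formula does not address. The inductive variant you mention in your closing remark is exactly the paper's proof.
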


\begin{proof}
  We construct the HP in $d$ steps. At the first step we set $P_1:=x+1$ if the
  first component of the CPP is a $p$ and $P_1:=x-1$ if it is a $c$. Suppose
  that the degree $k$ HP $P_k$ is constructed which defines the SP $\sigma _k$
  obtained from $\sigma$ by deleting its last $d-k$ components. Set
  $P_{k+1}(x):=P_k(x)(x+\varepsilon )$ if the last two components of
  $\sigma _{k+1}$ are equal or $P_{k+1}(x):=P_k(x)(x-\varepsilon )$ if they are
  different, where $\varepsilon >0$. One chooses $\varepsilon$ so small that: 
 \vspace{1mm}
 
 1) the signs of the first $k+1$ coefficients of $P_{k+1}$ are the same as the
 ones of $P_k$;
 \vspace{1mm}
 
 2) the number $\varepsilon$ is smaller than all the moduli of roots of $P_k$.
 \vspace{1mm}
 
 It is clear that for $k=d$, the HP $P_d$ thus obtained defines the SP
 $\sigma$ and that the order of the moduli of its roots is the canonical one.
\end{proof}

\begin{rems}\label{rems1}
  {\rm (1) The proposition can be generalized for real, but not necessarily
    hyperbolic polynomials, see Lemmas~14 and 17 in~\cite{FoKoSh}. The way of
    constructing new polynomials by adding new roots of modulus much smaller
    than the already existing moduli (which preserves the signs of the first
    $d+1$ coefficients) can be called {\em concatenation of polynomials
      (or of SPs)}.
    The construction described in the proof of Proposition~\ref{propevery}
    extends at each step the SP by adding a $(+)$- or $(-)$-sign at its rear.
    \vspace{1mm}
    
    (2) One
    can propose a similar concatenation, i.e.
    construction of HPs, in which each new root has
    a modulus much larger than the moduli of the already existing roots.
    Namely, given a degree $d$ HP $P(x)$ with no vanishing coefficients
    one considers the HP $(1\pm \varepsilon x)P(x)$
    which for $\varepsilon >0$ sufficiently small has the same signs
    of the last $d+1$ coefficients as $P$. Its new root equals
    $1/(\mp \varepsilon )$. After this one has to multiply the polynomial
    by $\pm 1/\varepsilon$ to make it monic again. This construction extends
    the SP by adding a $(+)$- or $(-)$-sign at its front.}
  \end{rems}

\begin{defi}
  {\rm A SP is called {\em canonical} if it is realizable only by HPs with
    canonical order of the moduli of their roots.}
\end{defi}

\begin{ex}\label{ex1}
  {\rm (1) The following SPs $\Sigma _{m_1,m_2,\ldots ,m_s}$ are canonical:}

    $$\Sigma _{m_1,1}~,~\Sigma _{1,m_2}~,~\Sigma _{m_1,1,m_3}~~~{\rm and~for~}m_2\geq 3,~~~
    \Sigma _{1,m_2,1}~,$$
           {\rm see Theorem~1, Corollary~1, Theorem~5 and Theorem~2 in
             \cite{Koarxiv} respectively. The SP $\Sigma _{1,2,1}$ is not
             canonical -- by part (1) of Example~2 therein
             the SP $\Sigma _{1,2,1}$ is realizable by each of the three
             polynomials $(x+1)(x-1.5)(x-1.6)$, $(x+1)(x-1.5)(x-0.6)$ and
             $(x+1)(x-0.5)(x-0.6)$.
             \vspace{1mm}
             
             (2) For $m_1\geq 2$, $m_2\geq 2$, the SP $\Sigma _{m_1,m_2}$ is not
             canonical, see Theorem~1 and Corollary~1 in~\cite{Koarxiv}.}
\end{ex}

In the present paper we give sufficient (see Theorem~\ref{tm1},
Proposition~\ref{prop1n11} and Corollary~\ref{cor1n11}) and necessary
conditions (see Theorem~\ref{tm2}) for a SP to be canonical. In
Section~\ref{secnoncanon} we consider non-canonical SPs with two sign variations
and we give a lower bound on the number of different orders of the moduli
of roots for which these SPs are realizable by~HPs.

\section{Preliminaries}

\begin{nota}
  {\rm (1) We set $\sigma ^m(P)=\sigma ((-1)^dP(-x))$ and
    $\sigma ^r(P)=\sigma (x^dP(1/x)/P(0))$.
    \vspace{1mm}
    
    (2) We call {\em first representation} of a SP the one with signs $(+)$ and/or $(-)$. For a SP in its {\em second representation} $\Sigma _{m_1,\ldots ,m_s}$, if each of its
    maximal sequences of, say, $k$ consecutive units is replaced by the symbol 
    $[k]$, then one obtains the {\em third representation} of the SP. E.g. the SP

    $$(+,-,-,+,-,+,-,-,-)~=~\Sigma _{1,2,1,1,1,3}$$
    can be represented also in the form $\Sigma _{[1],2,[3],3}$. We call the signs $(+)$ and $(-)$ of the first
    representation and the numbers $m_i$ of the second one {\em components}
    of the SP. The
    components larger than $1$ and the maximal sequences of units in the third
    representation are called
    {\em elements} of the~SP.}
\end{nota}

\begin{rems}\label{rems2}
  {\rm (1) The polynomial $x^dP(1/x)$ is the {\em reverted} of the polynomial
    $P$ (i.e. read from the back). Its roots are the reciprocals of the roots
    of $P$. The roots of $P(-x)$ are the opposite of the roots of $P$.
    \vspace{1mm}
    
    (2) The applications

    $$\iota_m:\sigma (P)\mapsto \sigma ^m(P)~~~\, \, \, {\rm and}~~~\, \, \,   
    \iota_r:\sigma (P)\mapsto \sigma ^r(P)$$
    are two commuting involutions. We set

    $$\sigma ^{mr}(P)~:=~
    \sigma ^m(\sigma ^r(P))~=~\sigma ^r(\sigma ^m(P))~=~:\sigma ^{rm}(P))~.$$
    For $d\geq 1$, it is always true that
    $\sigma (P)\neq \sigma ^m(P)$ (because their second signs are opposite),
    but one might have $\sigma (P)=\sigma ^r(P)$ or
    $\sigma (P)=\sigma ^{mr}(P)$. Thus the set $\{ \sigma (P)$, $\sigma ^m(P)$,
    $\sigma ^r(P)$, $\sigma ^{mr}(P)\}$
    contains either four or two distinct~SPs.
    \vspace{1mm}
    
    (3) The SPs $\sigma (P)$, $\sigma ^m(P)$, $\sigma ^r(P)$ and
    $\sigma ^{mr}(P)$ are simultaneously canonical or not. With regard to
    Example~\ref{ex1} one has}

  $$\begin{array}{ll}
    \sigma ^r(\Sigma _{1,m_2})=\Sigma _{m_2,1}~,&\\ 
  \sigma ^m(\Sigma _{1,m_2})=\Sigma _{2,[m_2-1]}~,&
  \sigma ^{mr}(\Sigma _{1,m_2})=\Sigma _{[m_2-1],2}~,\\ \\
  \sigma ^r(\Sigma _{m_1,1,m_3})=\Sigma _{m_3,1,m_1}~,&\\ 
  \sigma ^m(\Sigma _{m_1,1,m_3})=\Sigma _{[m_1-1],3,[m_2-1]}~,&
  \sigma ^{mr}(\Sigma _{m_1,1,m_3})=\Sigma _{[m_2-1],3,[m_1-1]}~,\\ \\ 
  \sigma ^r(\Sigma _{1,m_2,1})=\Sigma _{1,m_2,1}~~~{\rm and}& 
  \sigma ^m(\Sigma _{1,m_2,1})=\sigma ^{mr}(\Sigma _{1,m_2,1})=
  \Sigma _{2,[m_2-2],2}~.
  \end{array}$$
  \end{rems}

\begin{defi}\label{defitype1}
  {\rm (1) We say that a SP $\sigma$ of length $d+1$ is {\em of type 1}
    (notation: $\sigma \in \mathcal{T}_{1,d}$) if either
    all its even or all its odd positions contain the same sign.
    \vspace{1mm}
    
    (2)  We say that a SP $\sigma$ of length $d+1$ is {\em of type 2}
    (notation: $\sigma \in \mathcal{T}_{2,d}$) if
    \vspace{1mm}
    
    (i) in its second representation the SP $\sigma$ does not have two consecutive components $m_i$
    larger than $1$;
    \vspace{1mm}
    
    (ii) for $2\leq i\leq s-1$, one has $m_i\neq 2$ (but $m_1=2$ and/or $m_s=2$
    is allowed).}
\end{defi}

\begin{rem}
  {\rm SPs of type 1 are used in the formulation of a result concerning another
problem connected with Descartes' rule of signs and formulated for real
(not necessarily hyperbolic) polynomials, see Proposition~4 in~\cite{FoKoSh}.}
  \end{rem}

\begin{ex}\label{ex2}
    {\rm (1) For $d=6$ and for the
      SP

      $$\sigma ^{\dagger}~:=~(+,-,-,-,+,-,+)~=~\Sigma _{1,3,1,1,1}~=~
      \Sigma _{[1],3,[3]}$$
      one has $\mathcal{T}_{1,6}\ni \sigma ^{\dagger}\in \mathcal{T}_{2,6}$,
    because there are $(-)$-signs in all odd positions  
    (namely, $1$, $3$ and $5$) and conditions (i) and (ii) from
    Definition~\ref{defitype1} hold true.
    \vspace{1mm}
    
    (2) The
    SP $\sigma _0$ from Example~\ref{defiCPP} is
    neither a type 1 nor a type 2 SP.
    \vspace{1mm}
    
    (3) One has
    $\mathcal{T}_{1,7}\not\ni \Sigma_{[1],4,[3]}\in \mathcal{T}_{2,7}$.
    \vspace{1mm}
    
    (4) The following SPs are of type 1: $\Sigma _{A,[B]}$, $\Sigma _{[A],B}$,
    $\Sigma _{A,[2B+1],C}$, $\Sigma _{[A],2B+1,[C]}$, $A$, $B$, $C\in \mathbb{N}$.}
\end{ex}

\begin{rem}
  {\rm The SPs $\sigma (P)$, $\sigma ^m(P)$, $\sigma ^r(P)$ and
    $\sigma ^{mr}(P)$ are simultaneously of type 1 or not. E.g. of type 1 are
    the SPs $\sigma _{\bullet}:=\Sigma _{m_1,[u],m_s}$ for $s$ odd (with
    $u:=d+1-m_1-m_s$),}

  $$\sigma _{\bullet}^m=\Sigma _{[m_1-1],u+2,[m_s-1]}~,~~~\, 
  \sigma _{\bullet}^r=\Sigma _{m_s,[u],m_1}~~~\, {\rm and}~~~\,
  \sigma _{\bullet}^{mr}=\Sigma _{[m_s-1],u+2,[m_1-1]}~.$$
\end{rem}

\begin{prop}
  (1) One has $\mathcal{T}_{1,d}\subset \mathcal{T}_{2,d}$.
  \vspace{1mm}

  (2) One has $\iota_m( \mathcal{T}_{2,d})=\mathcal{T}_{2,d}$ and
  $\iota_r( \mathcal{T}_{2,d})=\mathcal{T}_{2,d}$.
\end{prop}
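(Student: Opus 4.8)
The plan is to establish part~(1) by a direct parity count on the positions of the sign pattern, and part~(2) by reformulating membership in $\mathcal{T}_{2,d}$ as a forbidden-factor condition on the change-preservation pattern (CPP), on which the two involutions act transparently.

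For part~(1), assume $\sigma\in\mathcal{T}_{1,d}$ and, without loss of generality, that all odd positions of $\sigma$ carry the same sign (the case of the even positions being symmetric). Write $\sigma=\Sigma_{m_1,\dots,m_s}$. To check condition~(i) of Definition~\ref{defitype1}: if $m_i\geq 2$ and $m_{i+1}\geq 2$ for some $i$, then the $i$-th block of $\sigma$, occupying at least two consecutive positions, contains an odd position, and so does the $(i+1)$-th block, which carries the opposite sign --- contradicting that all odd positions agree. To check condition~(ii): if $m_i=2$ for some $2\leq i\leq s-1$, let the $i$-th block occupy positions $a$ and $a+1$, with sign $\varepsilon$; then positions $a-1$ and $a+1$ (of one and the same parity) carry the signs $-\varepsilon$ and $\varepsilon$, while positions $a$ and $a+2$ (of the other parity) carry the signs $\varepsilon$ and $-\varepsilon$, so \emph{both} parity classes fail to be constant, i.e.\ $\sigma$ is not of type~1 --- a contradiction. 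Here $a-1$ is the last position of block $i-1$ and $a+2$ the first position of block $i+1$, which exist because $2\leq i\leq s-1$.

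For part~(2), the first step is to record, straight from the definitions of $\sigma^m$ and $\sigma^r$ (cf.\ Remarks~\ref{rems2}), how the involutions act on CPPs: $\iota_r$ reverses the CPP, while $\iota_m$ interchanges every $p$ with a $c$. The second and central step is the characterization: writing the CPP of $\sigma=\Sigma_{m_1,\dots,m_s}$ as $p^{m_1-1}c\,p^{m_2-1}c\cdots c\,p^{m_s-1}$, one has $\sigma\in\mathcal{T}_{2,d}$ if and only if this word contains neither $pcp$ nor $cpc$ as a factor. Indeed, a factor $pcp$ occurs exactly when two adjacent maximal blocks of $p$'s are both nonempty, i.e.\ when $m_j\geq 2$ and $m_{j+1}\geq 2$ for some $j$, which is the negation of~(i); and a factor $cpc$ occurs exactly when some maximal block of $p$'s lying strictly between two $c$'s has length $1$, i.e.\ when $m_i=2$ for some $2\leq i\leq s-1$, which is the negation of~(ii). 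Granting this, part~(2) follows immediately: the words $pcp$ and $cpc$ are palindromes, so ``avoiding $pcp$ and $cpc$'' is stable under reversal, whence $\iota_r(\mathcal{T}_{2,d})=\mathcal{T}_{2,d}$; and the $p\leftrightarrow c$ swap turns $pcp$ into $cpc$ and vice versa, so the same property is stable under $\iota_m$, whence $\iota_m(\mathcal{T}_{2,d})=\mathcal{T}_{2,d}$.

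The point I expect to cost the most care is the bookkeeping in this CPP characterization: one must keep track of the boundary blocks $p^{m_1-1}$ and $p^{m_s-1}$ (irrelevant to condition~(ii), which concerns only $2\leq i\leq s-1$), dispose of the degenerate cases $s=1$ and $s=2$ separately, and verify both implications of the two equivalences. None of this is conceptually difficult; once the forbidden-factor form is in place, both parts are short, and part~(2) in particular reduces to a one-line observation.
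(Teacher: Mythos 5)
Your proof is correct. For part~(1) you argue exactly as the paper does (only phrased as a direct contradiction rather than a contrapositive): a block of length $\geq 2$ occupies positions of both parities, so two adjacent such blocks ruin both parity classes, and an internal block of length exactly $2$ produces the configuration $(\dots,-\varepsilon,\varepsilon,\varepsilon,-\varepsilon,\dots)$ which again ruins both classes. For part~(2) you take a genuinely different route. The paper handles $\iota_r$ by inspection of conditions (i)--(ii) and handles $\iota_m$ by writing out how it transforms the elements of the third representation (an element $A>1$ becomes $[A-2]$ or $[A-1]$ according to whether it is internal or terminal, and dually for $[B]$), then checking that (i) and (ii) survive. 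You instead translate everything to the change-preservation pattern $p^{m_1-1}c\,p^{m_2-1}c\cdots c\,p^{m_s-1}$, observe that $\iota_r$ reverses this word and $\iota_m$ swaps $p\leftrightarrow c$ (both facts are immediate from the definitions and consistent with the examples in the paper), and characterize $\mathcal{T}_{2,d}$ as the set of SPs whose CPP avoids the factors $pcp$ and $cpc$; since these two words form a set closed under reversal and under the letter swap, both equalities follow at once. Your two equivalences (factor $pcp$ $\Leftrightarrow$ failure of (i), factor $cpc$ $\Leftrightarrow$ failure of (ii)) check out, including at the boundary blocks and in the degenerate cases $s=1,2$. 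What your approach buys is symmetry and transparency: the paper's rules (a)--(b) require a case split on terminal versus internal elements and leave the final verification to the reader (``one can deduce\dots''), whereas the forbidden-factor formulation makes the invariance a one-line observation and treats $\iota_m$ and $\iota_r$ on an equal footing. The cost is the small amount of bookkeeping needed to justify the CPP characterization itself, which you carry out correctly.
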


\begin{proof}
  Part (1). Indeed, if condition (i) of Definition~\ref{defitype1}
  does not hold true, then the SP is of the form

  $$(\cdots ,+,+,-,-,\cdots )~~~\, \, \, {\rm or}~~~\, \, \, 
  (\cdots ,-,-,+,+,\cdots )$$
  and each of the sequences of signs of even or odd monomials has
  a sign variation.
  If condition (ii) does not hold true, then the SP is of
  the form 
$$(\cdots ,-,+,+,-,\cdots )~~~\, \, \, {\rm or}~~~\, \, \, 
  (\cdots ,+,-,-,+,\cdots )$$
  and again each of these sequences has at least one sign variation.
  \vspace{1mm}
  
  Part (2). The inclusion $\iota_r( \mathcal{T}_{2,d})\subset \mathcal{T}_{2,d}$
  follows directly from Definition~\ref{defitype1}. As $\iota_r$ is an
  involution, this inclusion is an equality. For a SP
  $\sigma ^{\triangle}\in \mathcal{T}_{2,d}$, its image
  $\iota _m(\sigma ^{\triangle})$ is defined by the following rules:
  \vspace{1mm}
  
  (a) An element $A>1$ of $\sigma ^{\triangle}$ is replaced by $[A-2]$
  if $A$ is not at one of the ends of $\sigma ^{\triangle}$,
  and by $[A-1]$ if it is.
  \vspace{1mm}

  (b) An element $[B]$ of $\sigma ^{\triangle}$ is replaced by $B+2$
  if $[B]$ is not at one of the ends of $\sigma ^{\triangle}$,
  and by $B+1$ if it is.
  \vspace{1mm}

  One can deduce from rules (a) and (b) that conditions (i) and (ii) hold true
  for the SP~$\iota _m(\sigma ^{\triangle})$. Hence
  $\iota_r( \mathcal{T}_{2,d})\subset \mathcal{T}_{2,d}$ and as $\iota_m$ is an
  involution, this inclusion is an equality.

\end{proof}

\section{Results on canonical sign patterns}

\begin{tm}\label{tm1}
  Every type 1 SP is canonical.
\end{tm}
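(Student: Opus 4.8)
My plan is to exploit the splitting of $P$ into even and odd parts. Write $P(x)=A(x^{2})+xB(x^{2})$ with $A(y)=\sum_{k}a_{2k}y^{k}$ and $B(y)=\sum_{k}a_{2k+1}y^{k}$, so that $P(x)P(-x)=R(x^{2})$ where $R(y):=A(y)^{2}-yB(y)^{2}$. If $r_{1},\dots ,r_{d}$ are the roots of $P$, then $R(y)=(-1)^{d}\prod_{j}(y-r_{j}^{2})$; hence $R$ is hyperbolic, its (simple) roots $\rho_{1}<\cdots <\rho_{d}$ are the squares $\gamma_{i}^{2}$ of the ordered moduli, $R>0$ on $(-\infty ,\rho_{1})$, and $R$ changes sign at each $\rho_{i}$. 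The assumption that $\sigma (P)$ is of type $1$ is exactly the assumption that one of the two parity classes of coefficients of $P$ is sign-constant, i.e. that all coefficients of $A$, or all coefficients of $B$, have the same sign. Since ``type $1$'' and ``canonical'' are both invariant under $\iota_{m}$ and $\iota_{r}$, I may normalize: applying $\iota_{m}$ if needed, I assume all coefficients of $A$ are positive, so $A(y)>0$ for every $y\ge 0$ (Case I; the normalization ``all coefficients of $B$ positive'' is treated by exchanging the roles of $A$ and $B$, and $d=1$ is trivial).

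The first step is to read the kind of each root off the sign of $B$ at the corresponding squared modulus. At $\rho_{i}$ one has $A(\rho_{i})^{2}=\rho_{i}B(\rho_{i})^{2}$, and since $A(\rho_{i})>0$ this forces $B(\rho_{i})\ne 0$; evaluating $P$ at $\gamma_{i}$ and at $-\gamma_{i}$ then shows that $\gamma_{i}$ is a positive root of $P$ when $B(\rho_{i})<0$ and a negative root when $B(\rho_{i})>0$. Furthermore, wherever $R<0$ one has $yB(y)^{2}>A(y)^{2}\ge 0$, so $B(y)\ne 0$; thus $B$ has constant sign on each of the intervals $(\rho_{1},\rho_{2}),(\rho_{3},\rho_{4}),\dots$ on which $R<0$, and therefore $\gamma_{2k-1}$ and $\gamma_{2k}$ are always of the same kind. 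This already reproduces the shape of the canonical order: unwinding the definitions, its $i$-th entry is $N$ iff $a_{i-1}a_{i}>0$, so a change between positions $i$ and $i+1$ occurs iff $a_{i-1}$ and $a_{i+1}$ have opposite signs, which under our normalization can happen only for $i$ even --- precisely the positions left free above.

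To finish, it suffices to prove that the constant value of $B$ on the $k$-th interval where $R<0$ has sign $\operatorname{sgn}(a_{2k-1})$, the sign of the coefficient of $y^{k-1}$ in $B$. Indeed, since $\gamma_{2k-1},\gamma_{2k}$ lie on that interval, since $\gamma_{i}$ is a negative root iff $B(\rho_{i})>0$, and since one of $a_{i-1},a_{i}$ is an even-indexed (hence positive) coefficient, it would follow that $\gamma_{i}$ is a negative root iff $a_{i-1}a_{i}>0$, which is the $i$-th entry of the canonical order. For the statement about $\operatorname{sgn}B$: as $B$ vanishes nowhere where $R\le 0$ (at $\rho_{i}$ one has $B(\rho_{i})\ne 0$), every positive root of $B$ lies in one of the regions where $R>0$, and the sign of $B$ changes from the $k$-th to the $(k+1)$-st region where $R<0$ iff $B$ has an odd number of roots on the region where $R>0$ between them; one must show this occurs exactly when $a_{2k-1}$ and $a_{2k+1}$ differ in sign, the base of the comparison being the first region, on which $B$ has the sign of $B(0)=a_{1}$ because $(0,\rho_{1})$ contains no root of $B$ (and, when $d$ is odd, the last region runs to $+\infty$, where $B$ has the sign of its leading coefficient $a_{d}=1$).

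The part I expect to be the genuine obstacle is this last one: controlling the location of the roots of $B$ relative to the intervals cut out by the squared moduli, and in particular excluding roots of $B$ from the two extreme regions $(0,\rho_{1})$ and the final region where $R>0$. Here I would need the hyperbolicity of $P$ itself (not just that of $R$) together with the sign-constancy of $A$: the relevant mechanism belongs to the circle of ideas around the Hermite--Biehler theorem relating the roots of a real polynomial to those of its even and odd parts (equivalently, the zeros of $P(it)=A(-t^{2})+itB(-t^{2})$ all lie on the imaginary axis), which should force $B$ to be hyperbolic and its roots to be interlaced with the $\rho_{i}$ in such a way that the sign of $B$ flips from the $k$-th to the $(k+1)$-st region where $R<0$ precisely when $a_{2k-1}$ and $a_{2k+1}$ differ in sign. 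That the sign-constancy of $A$ is essential here is visible in Example~\ref{ex1}(2): for the non-canonical SP $\Sigma_{2,2}$ neither $A$ nor $B$ is sign-constant, and the order of the moduli genuinely fails to be determined.
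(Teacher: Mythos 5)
Your reduction is set up correctly as far as it goes: with $P(x)=A(x^2)+xB(x^2)$ and $R(y)=A(y)^2-yB(y)^2$, the identification of the type of $\gamma_i$ with $\mathrm{sgn}\,B(\rho_i)$ (when $A>0$ on $[0,\infty)$), the pairing of $\gamma_{2k-1}$ with $\gamma_{2k}$ via the sign-constancy of $B$ on the closed regions where $R\le 0$, and the translation of the canonical order into ``$\gamma_i$ is negative iff $a_{i-1}a_i>0$'' are all sound. But the proof stops exactly where the theorem begins: the claim that $\mathrm{sgn}\,B$ on the $k$-th region where $R<0$ equals $\mathrm{sgn}(a_{2k-1})$ is, given your correct preliminary steps, \emph{equivalent} to the statement of Theorem~\ref{tm1}, and you do not prove it --- you defer it to ``the circle of ideas around the Hermite--Biehler theorem,'' which ``should force'' $B$ to be hyperbolic with roots suitably interlaced with the $\rho_i$. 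No precise statement is invoked, and the standard Hermite--Biehler theorem (interlacing of the roots of $A$ and $B$ for Hurwitz-stable $P$) does not directly give what you need, namely that $B$ has no root in $(0,\rho_1)$ nor in the last region where $R>0$, and that the number of roots of $B$ in the $k$-th intermediate region where $R>0$ is odd exactly when $a_{2k-1}a_{2k+1}<0$. A Descartes count on $B$ bounds the total number of its positive roots by the number of sign changes of $(a_1,a_3,\dots)$, but that alone does not distribute them among the regions as required. In addition, the second normalization (``all coefficients of $B$ of one sign'') is not obtained by ``exchanging the roles of $A$ and $B$'': $R=A^2-yB^2$ is not symmetric in $A$ and $B$, and there the sign-constancy regions become those where $R\ge 0$ (which include the two unbounded ends), so the pairing and the base case of the sign-propagation both change.

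For comparison, the paper uses your same starting observation --- for a type~1 SP one of $P(\gamma)\pm P(-\gamma)$ is a sum of same-sign terms, so a positive and a negative root can never have equal moduli --- but then avoids locating the roots of $B$ altogether: it deforms $P$ through the hyperbolic family $P^*_t=tP+(1-t)P'$, $t\in[0,1]$, along which the no-collision fact keeps the order type of the moduli constant, and concludes by induction on $d$ since the order for $P'$ is canonical by hypothesis and the new smallest modulus enters at the correct end. If you want to salvage your approach, the most promising route is not Hermite--Biehler but precisely such a homotopy/induction argument to pin down the residence of the roots of $B$; as written, the proposal is not a proof.
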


\begin{proof}
  We prove the theorem by induction on
  $d$. For $d=1$, there is nothing to prove.
  For $d=2$, one has to consider the SPs $\sigma ^{\sharp}:=(+,+,-)$ and
  $\sigma ^{\flat}:=(+,-,-)$. For a HP $P:=(x-a)(x+b)=x^2+g_1x+g_0$, one has
  $g_1=b-a$ which is $>0$ if $b>a$ and $<0$ for $b<a$ from which for $d=2$ the
  theorem follows.

  Suppose that $d\geq 3$.
  Consider a HP $P$ with all roots
  simple defining a SP
  $\sigma$. In the one-parameter family of polynomials $P^*_t:=tP+(1-t)P'$,
  $t\in [0,1]$, every polynomial is hyperbolic with all roots simple
  and for $t\in (0,1]$, all roots
  of $P^*_t$ are nonzero. Moreover, for $t\in (0,1]$, the polynomial $P^*_t$
  defines the SP $\sigma$. 

      For $t=0$, by inductive assumption, the moduli of the roots of the HP $P'$
      define the canonical order. For $t\in (0,1]$, there is no equality
        between a modulus of a positive and a modulus of a negative root of
        $P^*_t$. Indeed, if $P^*_t$ has roots $\pm \gamma$, $\gamma >0$, then  

        \begin{equation}\label{eqQpm}
          Q_{t,\pm}(\gamma )~:=~P^*_t(\gamma )~\pm
          ~P^*_t(-\gamma )~=~0~.\end{equation}
        This is impossible,
        because at least one of the two quantities $Q_{t,\pm}(\gamma )$ is a sum of
        terms of the same sign. 

        Thus the $d-1$ largest of the moduli of roots of $P^*_t$ define the
        same order as the roots of $P^*_0$ (which is the canonical
        order w.r.t. the SP
        obtained from $\sigma$ by deleting its last component). The root of
        least modulus (for $t$ close to $0$) is positive if the last two
        components of $\sigma$ are different and negative if they are equal.
        Thus for $t\in (0,1]$, the moduli of the roots of $P^*_t$ (hence in
        particular the ones of $P^*_1$)  define the canonical order.

\end{proof}

\begin{tm}\label{tm2}
A canonical SP is a type 2 SP.
%
\end{tm}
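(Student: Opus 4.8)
The plan is to prove the contrapositive: if a SP $\sigma$ of length $d+1$ is \emph{not} of type 2, then it is \emph{not} canonical. (For $d\le 2$ every SP is trivially of type 2, so there is nothing to do there.) Write $\sigma=(\sigma_1,\dots,\sigma_{d+1})$ with $\sigma_1=+$. By Definition~\ref{defitype1}, failure of type 2 means that either (a) $\sigma$ has two consecutive components $m_i,m_{i+1}>1$ in its second representation, or (b) $\sigma$ has a component $m_i=2$ with $2\le i\le s-1$. In case (a) the SP carries, on four consecutive positions $k+1,\dots,k+4$, the signs $\varepsilon,\varepsilon,-\varepsilon,-\varepsilon$; in case (b), the signs $-\varepsilon,\varepsilon,\varepsilon,-\varepsilon$; a short count shows $0\le k\le d-3$ in both cases. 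Set $\mu:=(+,\sigma_{k+1}\sigma_{k+2},\sigma_{k+1}\sigma_{k+3},\sigma_{k+1}\sigma_{k+4})$, the renormalization of this window starting with $+$. One checks at once that $\mu=\Sigma_{2,2}$ in case (a) and $\mu=\Sigma_{1,2,1}$ in case (b). Both are non-canonical by Example~\ref{ex1}, so $\mu$ is realizable by a degree-$3$ HP $M$ with three distinct moduli \emph{not} in the canonical order of $\mu$; this $M$ is the ``seed''.

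The second step is to reconstruct $\sigma$ from $M$ by two rounds of concatenation, keeping the roots of $M$ together as a block. Using the construction of Remarks~\ref{rems1}(1) I would append to $M$, one sign at a time, the signs that turn $\mu$ into $(+,\sigma_{k+1}\sigma_{k+2},\dots,\sigma_{k+1}\sigma_{d+1})$, each new root chosen smaller in modulus than, and of modulus distinct from, all roots already present; this leaves the three roots of $M$ as the three roots of largest modulus, with unchanged mutual order and signs. Using the construction of Remarks~\ref{rems1}(2) I would then prepend $k$ new roots of successively larger modulus. Here one must remember that prepending a $(-)$ turns $(+,\nu_1,\dots,\nu_n)$ into $(+,-,-\nu_1,\dots,-\nu_n)$ after renormalization; so the signs prepended, from last to first, must be taken as $b_l=\sigma_{k-l+2}\sigma_{k-l+1}$, $l=1,\dots,k$, and a short telescoping computation then shows that the SP of the resulting HP is exactly $\sigma$. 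After this second round the three roots of $M$ occupy the positions $d-k-2,\ d-k-1,\ d-k$ in the increasing order of moduli, still consecutively and still with their original labels.

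The decisive point is to compare the order so obtained with the canonical order of $\sigma$. The canonical order of $\sigma$ reads the CPP of $\sigma$ from the back, replacing $p$ by $N$ and $c$ by $P$ (Definition~\ref{defiCO}); the three roots of $M$ correspond to the CPP entries of $\sigma$ in positions $k+1,k+2,k+3$, and these depend only on $\sigma_{k+1},\dots,\sigma_{k+4}$, hence coincide with the CPP of $\mu$. Therefore the labels assigned by the canonical order of $\sigma$ to the block of roots coming from $M$ are precisely the labels of the canonical order of $\mu$. But $M$ realizes $\mu$ in a non-canonical way, so these three roots are not arranged as the canonical order of $\sigma$ demands; hence the final HP realizes $\sigma$ with a non-canonical order and $\sigma$ is not canonical. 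One still has to dispose of the degenerate configurations in which the window sits at an end of $\sigma$ (so one of the two concatenation rounds is vacuous), and of the extreme case $d=3$, where $\sigma=\mu$ and one simply takes $M$ itself.

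The main obstacle, I expect, is not any single computation but the careful bookkeeping in two places: the sign tracking in the prepending step (the ``flip'' produced by a prepended minus, and the telescoping that recovers $\sigma$), and above all the clean verification that ``reading the CPP from the back'' is a \emph{local} operation, so that the canonical order of the long SP $\sigma$, restricted to the three middle roots, is literally the canonical order of the short seed $\mu$. Once $\Sigma_{2,2}$ and $\Sigma_{1,2,1}$ are taken for granted to be non-canonical, everything else reduces to this bookkeeping about concatenation.
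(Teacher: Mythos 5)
Your proposal is correct and follows essentially the same route as the paper: both isolate a non-canonical sub-pattern ($\Sigma_{2,2}$, a special case of the paper's $\Sigma_{A,B}$, or $\Sigma_{1,2,1}$), realize it by a HP with non-canonical order of the moduli, and rebuild the full SP by rear and front concatenations that keep the seed's roots as a consecutive block in the order of moduli. The only cosmetic difference is that the paper extends \emph{two} seed polynomials with different orders and concludes from the fact that the two orders remain different, which lets it bypass your ``locality of the canonical order'' bookkeeping.
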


\begin{rem}
{\rm Theorem~\ref{tm2} proposes necessary conditions for a SP to be canonical.
  It would be interesting to know how far from sufficient these
  conditions are.}
\end{rem}

\begin{proof}
  Suppose that a given SP $\sigma$ has components $m_j=A>1$ and $m_{j+1}=B>1$.
  The SP $\Sigma _{A,B}$ is not
  canonical, see part (2) of Example~\ref{ex1}. Hence one can construct
  two polynomials $P$ and $Q$ defining the SP $\Sigma _{A,B}$ and with
  different orders of their moduli of roots. To construct two polynomials
  realizing the SP $\sigma$ one starts with $P$ and $Q$ and then uses
  concatenation of polynomials as described in the proof of
  Proposition~\ref{propevery} and in Remarks~\ref{rems1}. At each new
  concatenation the modulus of the new root is either much smaller or much
  larger than the moduli of the previously existing roots. Hence the orders
  of the moduli of the roots of the two polynomials constructed in this way
  after $P$ and $Q$ remain different. 

  If the SP $\sigma$ has a component $m_i=2$, $2\leq i\leq s-1$, then it suffices to consider the case
  $m_{i-1}=m_{i+1}=1$. In this case one chooses two polynomials $P$ and $Q$
  realizing the SP $\Sigma _{1,2,1}$ with different orders of the moduli of
  their roots; such polynomials exist, see part (1) of Example~\ref{ex1}.
  After this one again uses the techniques of concatenation to realize the
  SP $\sigma$ with two 
  different orders of the moduli of the roots,
  starting with $P$ and $Q$ respectively.
  \end{proof}

\begin{prop}\label{prop1n11}
  For $d\geq 5$, the SP $\Sigma _{[1],d-2,[2]}$ is canonical.
\end{prop}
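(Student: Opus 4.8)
The plan is to imitate the Rolle-type argument in the proof of Theorem~\ref{tm1}, using the derivative as the inductive handle. Let $P$ be a monic HP realizing $\Sigma _{[1],d-2,[2]}=\Sigma _{1,d-2,1,1}$ whose $d$ roots have pairwise distinct moduli (so all roots are simple). Its derivative $P'$ realizes the SP obtained by deleting the last component, namely $\Sigma _{1,d-2,1}=\Sigma _{[1],d-2,[1]}$; since $d\geq 5$ gives $d-2\geq 3$, part~(1) of Example~\ref{ex1} says this SP is canonical, so the moduli of the roots of $P'$ are already in the canonical order $(P,N^{d-3},P)$. Reading the CPP $(c,p^{d-3},c,c)$ of $\Sigma _{[1],d-2,[2]}$ from the back, what we must prove for $P$ is that its moduli are in the order $(P,P,N^{d-3},P)$, i.e.\ that the two smallest and the largest modulus belong to positive roots. (When $d$ is odd the SP $\Sigma _{[1],d-2,[2]}$ is already of type~1 --- all its odd positions carry a $-$ --- so it is canonical by Theorem~\ref{tm1}; the real content is the case of even $d$.)

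The next step is to introduce the one-parameter family $P^*_t:=tP+(1-t)P'$, $t\in[0,1]$, of HPs with simple roots (as in the proof of Theorem~\ref{tm1}), with $P^*_1=P$ and $P^*_0=P'$, and to track the roots of $P^*_t$ from $t=0$ to $t=1$. A modulus of a positive root of $P^*_t$ can meet a modulus of a negative root only if $\pm\gamma$ are both roots, whence $Q_{t,+}(\gamma)=Q_{t,-}(\gamma)=0$ in the notation of~(\ref{eqQpm}); one wants to rule this out by a sign argument on the even and odd parts of $P^*_t$. Granting this, the $d-1$ roots of $P^*_t$ coming from $P'$ keep the order $(P,N^{d-3},P)$, and one has to locate the remaining root and check that inserting it turns this into $(P,P,N^{d-3},P)$. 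It is natural to run the analogous argument also for $\sigma^r$: the reverted polynomial $\widetilde P$ realizes $\Sigma _{[2],d-2,[1]}=\Sigma _{1,1,d-2,1}$, its derivative realizes $\Sigma _{1,1,d-2}$, which is of the form $\Sigma _{m_1,1,m_3}$ and hence canonical by Example~\ref{ex1}, and $P$ is canonical if and only if $\widetilde P$ is by Remark~\ref{rems2}(3); the constraints coming from $P'$ and from $(\widetilde P)'$ together should pin down the two ``extreme'' positions of the order.

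The hard part is exactly that $\Sigma _{[1],d-2,[2]}$ fails to be of type~1 for even $d$: then neither the even nor the odd part of $P^*_t$ is sign-definite for intermediate $t$, so $Q_{t,+}$ and $Q_{t,-}$ are not automatically sums of terms of one sign; moreover $P^*_t$ does \emph{not} realize $\Sigma _{[1],d-2,[2]}$ along the whole family, since the coefficient of $x^{d-1}$ changes sign, and, because $a_0a_1<0$ here, a root of $P^*_t$ passes through $0$ at some $t_*\in(0,1)$. So the real content of the proof is the bookkeeping of this zero-crossing root together with the root escaping to infinity as $t\to0^+$, and of the finitely many $t$ where a positive and a negative modulus could \emph{a priori} coincide: one must show that, after all these events, the extra root of $P^*_1=P$ is precisely the second-smallest modulus and every negative modulus still lies strictly between the second and the largest positive modulus. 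I expect the cleanest route to be to localize the family near its two endpoints, where one of $Q_{t,+}$, $Q_{t,-}$ is sign-definite, and to feed in the information obtained from $(\widetilde P)'$; making this watertight for even $d$ is the step I expect to be the main obstacle.
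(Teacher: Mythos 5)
Your reduction of the odd case to Theorem~\ref{tm1} is correct, and you have accurately diagnosed why the homotopy $P^*_t=tP+(1-t)P'$ from that proof breaks down for even $d$: the SP is no longer of type~1, neither $Q_{t,+}$ nor $Q_{t,-}$ is sign-definite, the SP is not preserved along the family, and a root crosses~$0$. But having diagnosed the obstacle you do not overcome it --- you explicitly defer ``the bookkeeping of this zero-crossing root'' and the coincidences of moduli to future work --- so the proposal is a plan with a hole exactly where the proposition's content lies. The paper does not attempt to repair the homotopy at all; it abandons deformation entirely in favour of a static argument. Normalizing the middle positive modulus to $1$, write the positive moduli as $\varepsilon<1<A$ and the negative ones as $\gamma_1<\cdots<\gamma_{d-3}$. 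Rolle's theorem applied to $P'$ (whose SP $\Sigma_{1,d-2,1}$ is canonical) gives $\varepsilon<\gamma_1$; the auxiliary polynomial $P^{\dagger}:=xP'-dP$, which realizes the canonical SP $\Sigma_{d-2,[2]}$, together with a limit/interlacing analysis gives $1<\gamma_2$; the single coefficient inequality $a_{d-1}<0$, i.e.\ $A+1+\varepsilon-\sum\gamma_j>0$, forces $A>\gamma_{d-3}$; and the remaining claim $\gamma_1>1$ is proved by assuming $\gamma_1\leq 1$ and combining the sign conditions $a_0<0$, $a_1>0$, $a_2<0$ into the explicit contradiction~(\ref{eqineq1})--(\ref{eqineq4}).

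It is worth noting that your instinct to bring in the reverted polynomial is sound: up to reversion and sign, $P^{\dagger}=xP'-dP$ \emph{is} the derivative of $x^dP(1/x)$, so the paper does use essentially the object you call $(\widetilde P)'$. However, your hope that the constraints from $P'$ and $(\widetilde P)'$ ``together should pin down the two extreme positions'' is not borne out: those two Rolle-type constraints yield only $\varepsilon<\gamma_1$ and $1<\gamma_2$, which still permits $\gamma_1\leq 1$, i.e.\ a non-canonical order. The decisive step $\gamma_1>1$ genuinely requires the extra input of the coefficient inequalities for $a_0$, $a_1$, $a_2$ (and $a_{d-1}$ for the position of $A$), and no version of the deformation argument you outline supplies it. So the gap is not merely technical bookkeeping along the homotopy; a different idea is needed, and it is missing from the proposal.
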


\begin{cor}\label{cor1n11}
  For $d\geq 5$, the three SPs
  $\Sigma_{[2],d-2,[1]}=\sigma ^r(\Sigma _{[1],d-2,[2]})$,
  $\Sigma_{2,[d-4],3}=\sigma ^m(\Sigma _{[1],d-2,[2]})$ and
  $\Sigma_{3,[d-4],2}=\sigma ^{mr}(\Sigma _{[1],d-2,[2]})$ are canonical.
\end{cor}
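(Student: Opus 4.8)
The plan is to obtain the three SPs of the statement as the images of $\Sigma_{[1],d-2,[2]}$ under the involutions $\iota_r$, $\iota_m$ and $\iota_{mr}=\iota_m\circ\iota_r$, and then to quote the invariance principle of Remark~\ref{rems2}(3), according to which a SP and its images under $\iota_r$, $\iota_m$, $\iota_{mr}$ are simultaneously canonical or not. Since Proposition~\ref{prop1n11} gives that $\Sigma_{[1],d-2,[2]}$ is canonical for $d\ge 5$, the corollary will follow once the three identifications
$$\sigma^r(\Sigma_{[1],d-2,[2]})=\Sigma_{[2],d-2,[1]},\quad
\sigma^m(\Sigma_{[1],d-2,[2]})=\Sigma_{2,[d-4],3},\quad
\sigma^{mr}(\Sigma_{[1],d-2,[2]})=\Sigma_{3,[d-4],2}$$
are verified.

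For $\iota_r$ one uses that $\sigma^r$ reverses the string of coefficient-signs and normalizes the leading sign to $+$: starting from $\Sigma_{[1],d-2,[2]}=(+,-,\dots,-,+,-)$ with $d-2$ interior minuses, reversal gives $(-,+,-,\dots,-,+)$ and normalization gives $(+,-,+,\dots,+,-)=\Sigma_{[2],d-2,[1]}$. For $\iota_m$ one uses the rules recalled in the proof that $\iota_m(\mathcal{T}_{2,d})=\mathcal{T}_{2,d}$: an element $A>1$ is replaced by $[A-2]$ in the interior and by $[A-1]$ at an end, while an element $[B]$ is replaced by $B+2$ in the interior and by $B+1$ at an end. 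Applied to $\Sigma_{[1],d-2,[2]}$, whose elements are the boundary $[1]$, the interior block $d-2$ and the boundary $[2]$, this yields $\Sigma_{2,[d-4],3}$; applied to $\sigma^r=\Sigma_{[2],d-2,[1]}$ it yields $\Sigma_{3,[d-4],2}$, which equals $\sigma^{mr}(\Sigma_{[1],d-2,[2]})$ because $\iota_m$ and $\iota_r$ commute.

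There is no genuine obstacle here; the only points requiring care are the bookkeeping in these three transformations — correctly distinguishing boundary from interior elements when applying the rules for $\iota_m$, and remembering the sign normalization that may be forced after reversing the coefficient string in $\iota_r$. The hypothesis $d\ge 5$ is used exactly to ensure $d-2>2$, so that the interior element $d-2$ of $\Sigma_{[1],d-2,[2]}$ is sent by $\iota_m$ to the nonempty block $[d-4]$ (equivalently, that $\Sigma_{[1],d-2,[2]}$, and hence all four SPs in question, lie in $\mathcal{T}_{2,d}$, consistently with Theorem~\ref{tm2}).
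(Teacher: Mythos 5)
Your proposal is correct and follows exactly the paper's route: the paper's entire proof is the one-line appeal to part (3) of Remarks~\ref{rems2} (the simultaneous canonicity of $\sigma(P)$, $\sigma^m(P)$, $\sigma^r(P)$, $\sigma^{mr}(P)$) combined with Proposition~\ref{prop1n11}. Your explicit verification of the three identifications via the reversal-plus-normalization description of $\iota_r$ and rules (a)--(b) for $\iota_m$ is just the bookkeeping the paper leaves implicit in the statement of the corollary.
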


The corollary follows from part (3) of Remarks~\ref{rems2}.

\begin{proof}[Proof of Proposition~\ref{prop1n11}]
  For $d\geq 5$ odd, the SP is of type 1 and one can apply Theorem~\ref{tm1}.
  For $d=4$, the SP is not canonical,
  see Theorem~\ref{tm2}. So we assume that $d\geq 6$
  (the parity of $d$ is of no importance in the proof). 
  Without loss of generality one can assume that the middle modulus of a
  positive root of a HP $P:=x^d+\sum _{j=0}^{d-1}a_jx^j$ realizing the SP $\Sigma _{[1],d-2,[2]}$ equals $1$ (this can be achieved by a linear change of the
  variable $x$). So we denote the moduli of positive roots by
  $0<\varepsilon <1<A$, and by $0<\gamma _1<\gamma _2<\cdots <\gamma _{d-3}$
  the moduli of negative roots. 
  
  Denote by
  $0<\delta _1<\cdots <\delta _{d-3}$ the moduli of negative and by
  $0<\varphi <\psi$ the moduli of positive roots of $P'$ (recall that $P'$
  defines the SP $\Sigma _{1,d-2,1}$ which is canonical, see part (1) of
  Example~\ref{ex1}). As $\Sigma _{1,d-2,1}$ is canonical, one has 
  $\varphi <\delta _1$, and by Rolle's theorem,
  $\gamma _j<\delta _{j+1}<\gamma _{j+1}$, $j=1$, $\ldots$, $d-4$.
  For $\delta _1$, one has $0<\delta _1<\gamma _1$.
  Thus
  $$\varepsilon ~<~\varphi ~<~\delta _1~<~\gamma _1~.$$

  Denote by $0<\eta _1<\cdots <\eta _{d-3}$ the moduli of negative and by
  $0<\lambda <\mu$ the moduli of positive roots of the HP
  $P^{\dagger}:=xP'-dP=-a_{d-1}x^{d-1}-2a_{d-2}x^{d-2}-\cdots -da_0$.
  The
  latter defines the SP $\Sigma _{d-2,[2]}$ which is canonical, see part (1) of
  Example~\ref{ex1}. The positive roots of $P$ and $P^{\dagger}$ interlace, and so do their negative roots as well; we will see below that this is not true about all the roots of $P$ and $P^{\dagger}$. The
  leading coefficient of $P^{\dagger}$ is positive, so the limits at $+\infty$
  of $P$ and $P^{\dagger}$ equal $+\infty$. Their limits at $-\infty$
  are opposite. The leftmost root of $P$ equals $-\gamma _{d-3}$. One has
  $P^{\dagger}(-\gamma _{d-3})=-\gamma _{d-3}P'(-\gamma _{d-3})$. Hence 

  $$\begin{array}{lll}
    {\rm either}&\lim _{x\rightarrow -\infty}P(x)~=~-\infty ~,&
    P'(-\gamma _{d-3})~>~0~,\\ \\ 
    &\lim _{x\rightarrow -\infty}P^{\dagger}(x)~=~+\infty ~,&
    P^{\dagger}(-\gamma _{d-3})~<~0
         \\ \\ {\rm or}&
    \lim _{x\rightarrow -\infty}P(x)~=~+\infty ~,&P'(-\gamma _{d-3})~<~0~,\\ \\ 
&\lim _{x\rightarrow -\infty}P^{\dagger}(x)~=~-\infty ~,&
P^{\dagger}(-\gamma _{d-3})~>~0~.
  \end{array}$$
  In both cases the leftmost root $-\eta _{d-3}$
  of $P^{\dagger}$ is $<-\gamma _{d-3}$. By Rolle's theorem and using the fact
  that the SP $\Sigma _{d-2,[2]}$ is canonical,

  $$0~<~\lambda ~<~1~<~\mu ~<~\eta _1~<~\gamma _2~<~\eta _2~.$$
One can show that $-\eta _1<-\gamma _1<\varepsilon <\lambda$ which means that the interlacing of the roots of $P$ and $P^{\dagger}$ is interrupted when the variable $x$ passes through~$0$. The condition $a_{d-1}<0$ reads:

\begin{equation}\label{eqAeps}A+1+\varepsilon -\sum _{j=1}^{d-3}\gamma _j~>~0~.
\end{equation}
As $\gamma _1>\varepsilon$ and $\gamma _2>1>\varepsilon$,
condition (\ref{eqAeps}) is
possible only if $A>\gamma _{d-3}$. Thus
$\varepsilon <\gamma _1<\cdots <\gamma _{d-3}<A$ and to prove the proposition
there remains to show that $1<\gamma _1$. 
Set 

$$\begin{array}{cclccclc}\sigma _1&:=&\sum _{j=3}^{d-3}1/\gamma _j&,&
  \sigma _2&:=&\sum _{3\leq i<j\leq d-3}1/(\gamma _i\gamma _j)&,\\ \\ B&:=&
  \frac{1}{A}+1+\frac{1}{\varepsilon}&{\rm and}&C&:=&\frac{1}{A\varepsilon}+
  \frac{1}{A}+\frac{1}{\varepsilon}&.\end{array}$$ 
The conditions $a_0<0$, $a_1>0$ and $a_2<0$ imply 

\begin{equation}\label{eqtwoineq}\begin{array}{lclclclc}
    &&B&-&\left( \frac{1}{\gamma _1}+\frac{1}{\gamma _2}+\sigma _1\right) &>&0&
    {\rm and}\\ \\ 
    \Phi&:=&\Lambda&-&B \left( \frac{1}{\gamma _1}+\frac{1}{\gamma _2}+
    \sigma _1\right) &>&0~,&{\rm where}\\ \\ \Lambda&:=&C&
    +&\frac{1}{\gamma _1\gamma _2}+\left( \frac{1}{\gamma _1}+
    \frac{1}{\gamma _2}\right) \sigma _1+\sigma _2&.&&
                                 \end{array}
\end{equation}
Suppose that $\gamma _1\leq 1$. Then the following inequalities hold true:

\begin{equation}\label{eqineq1}
 \frac{1}{A\varepsilon}-\frac{1}{\gamma _2\varepsilon}~<~0~,
\end{equation}
because $A>\gamma _2$,
\begin{equation}\label{eqineq2}
  -\frac{1}{\gamma _1}\left( \frac{1}{A}+\frac{1}{\varepsilon}\right) +
  \frac{1}{A}+\frac{1}{\varepsilon}~\leq ~0~,
\end{equation}

\begin{equation}\label{eqineq3}
  -B\sigma _1+\left( \frac{1}{\gamma _1}+\frac{1}{\gamma _2}\right) \sigma _1+
  \sigma _2~<~0,
\end{equation}
(because $-B\sigma _1<-(1/\gamma _1+1/\gamma _2+\sigma _1)\sigma _1$,
see the first of
inequalities (\ref{eqtwoineq}), and one has $\sigma _2<(\sigma _1)^2$) and as
$\gamma _2>1$, 

\begin{equation}\label{eqineq4}
 -\frac{1}{\gamma _1}+\frac{1}{\gamma _1\gamma _2}~<~0.
\end{equation}
The sum of the left-hand sides of inequalities (\ref{eqineq1}),
(\ref{eqineq2}), (\ref{eqineq3}) and 
(\ref{eqineq4}) equals

$$
  \Lambda -\left( \frac{1}{\gamma _2\varepsilon}+\frac{1}{\gamma _1}\left(
  \frac{1}{A}+\frac{1}{\varepsilon}\right) +B\sigma _1+\frac{1}{\gamma _1}
  \right) =\Lambda -B\left( \frac{1}{\gamma _1}+\frac{1}{\gamma _2}+
  \sigma _1\right) +\frac{1}{\gamma _2}+\frac{1}{\gamma _2A}~.$$
Thus $\Lambda -B\left( \frac{1}{\gamma _1}+\frac{1}{\gamma _2}+
\sigma _1\right) +\frac{1}{\gamma _2}+\frac{1}{\gamma _2A}<0$ which
  contradicts the second of inequalities
  (\ref{eqtwoineq}). \end{proof}

\section{On non-canonical sign patterns\protect\label{secnoncanon}}

The present section deals with SPs with two sign changes, i.e. with $s=3$,
see Notation~\ref{nota1}.
For $m_1\geq 2$, $m_2\geq 2$, $m_3\geq 2$, such a SP is not canonical, see
Theorem~\ref{tm2}.

\begin{nota}
{\rm We set $m:=m_1$, $n:=m_2$, $q:=m_3$ and we denote by $0<\beta <\alpha$
the positive and by $-\gamma _{d-2}<\cdots <-\gamma _1<0$ the negative roots of
a degree $d$ HP $P$ realizing the SP $\Sigma _{m,n,q}$. By $m^*$, $n^*$, $q^*$
we denote the numbers of negative roots of modulus larger than $\alpha$,
between $\beta$ and $\alpha$ and smaller than $\beta$ respectively; hence
$m^*+n^*+q^*=d-2$. 
By $\tau _1\geq 0$, $\tau _2\geq 0$, $\delta>0$, $\ell >0$ and $r\geq 2$,
we denote integers, where $d=\delta +\tau _1+\tau _2$.}
\end{nota}

We remind that the canonical order of the roots corresponds to the case
$m^*=m-1$, $n^*=n-1$, $q^*=q-1$, see Definition~\ref{defiCO}. 

\begin{tm}\label{tm3}
  (1) For

  $$\begin{array}{lll}r^2~<~\delta ~<~(r+1)^2~,&\delta -r~\in ~2\mathbb{Z}+1~,&
    \\ \\ 
    m~\geq ~(\delta -r+1)/2~,&q~\geq ~(\delta -r+1)/2&{\rm and}~~~\, \, \, 
    n~=~r~,\end{array}$$
  the SP
  $\Sigma _{m,n,q}$ is
  realizable by HPs with all possible values of $m^*$, $n^*$, $q^*$ such that
  $m^*\geq \tau _1:=m-(\delta -r+1)/2$ and $q^*\geq \tau _2:=q-(\delta -r+1)/2$.
  \vspace{1mm}
  
  (2) For

  $$\begin{array}{lll}r^2~<~\delta ~<~(r+1)^2~,&\delta -r~\in ~2\mathbb{Z}~,&
    \\ \\ 
    m~\geq ~(\delta -r)/2~,&q~\geq ~(\delta -r)/2&{\rm and}~~~\, \, \, n~=~r+1~,
  \end{array}$$
  the SP
  $\Sigma _{m,n,q}$ is
  realizable by HPs with all possible values of $m^*$, $n^*$, $q^*$ such that
  $m^*\geq \tau _1:=m-(\delta -r)/2$ and $q^*\geq \tau _2:=q-(\delta -r)/2$.
  \vspace{1mm}
  
  (3) For $\delta =r^2$, the SP

  $$(~\tau _1+r(r-1)/2+1~,~r~,~\tau _2+r(r-1)/2~)~~~({\rm resp.}~~~
  (~\tau _1+r(r-1)/2~,~r~,~\tau _2+r(r-1)/2+1~))$$
  is realizable by HPs with all possible values of $m^*$, $n^*$, $q^*$
  such that $m^*\geq \tau _1+1$ and $q^*\geq \tau _2$ (resp.
  $m^*\geq \tau _1$ and $q^*\geq \tau _2+1$).
  \end{tm}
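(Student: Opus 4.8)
The plan is to treat the three parts by one scheme: produce the claimed orders for a small ``core'' configuration and propagate them by concatenation (Remarks~\ref{rems1}). Multiplying a monic HP realizing $\Sigma_{m,n,q}$ by $(1+\varepsilon x)$ and rescaling to monic, with $\varepsilon>0$ small, keeps it hyperbolic with simple roots, prepends a $(+)$ to its SP, and adjoins a negative root of modulus larger than all previous moduli (hence larger than $\alpha$); so it sends $(m,m^*)\mapsto(m+1,m^*+1)$ and leaves $n,n^*,q,q^*$ unchanged. Multiplying by $(x+\varepsilon)$ instead appends a $(+)$ and adjoins a negative root of modulus smaller than $\beta$, sending $(q,q^*)\mapsto(q+1,q^*+1)$. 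Hence it suffices to prove parts (1) and (2) in the core case $\tau_1=\tau_2=0$ (so $d=\delta$, $m=q$): applying $\tau_1$ front- and $\tau_2$ rear-concatenations to a core realization then yields $\Sigma_{m,n,q}$ with every triple satisfying $m^*\ge\tau_1$, $q^*\ge\tau_2$, $n^*\ge0$, $m^*+n^*+q^*=d-2$, and these are exactly the triples obtainable this way.

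Part (3) then follows from part (2) with \emph{no} new construction. Indeed, the SP $(\tau_1+r(r-1)/2+1,\,r,\,\tau_2+r(r-1)/2)$ is obtained from the part (2) core $\Sigma_{r(r-1)/2,\,r,\,r(r-1)/2}$ of degree $r^2-1$ — which lies in the range of part (2) for the parameter $r-1$, since $(r-1)^2<r^2-1<r^2$, the number $(r^2-1)-(r-1)=r(r-1)$ is even, and the middle length equals $(r-1)+1=r$ — by one front-concatenation (which raises $m^*$ to $\ge1$) followed by $\tau_1$ further front- and $\tau_2$ rear-concatenations. The ``resp.'' SP is obtained the same way using one rear-concatenation first, or equivalently is the $\iota_r$-image of the first: reverting a HP interchanges $m\leftrightarrow q$ and $m^*\leftrightarrow q^*$ and fixes $n,n^*$ (cf. part (3) of Remarks~\ref{rems2}).

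It remains to treat the core of parts (1) and (2). There the SP is palindromic, $\Sigma_{m,n,m}$, so by $\iota_r$ its set of realizable triples is symmetric under $m^*\leftrightarrow q^*$, which halves the casework. For $r=1$ part (2) is the SP $\Sigma_{1,2,1}$, and Example~\ref{ex1}(1) already exhibits realizations with $(m^*,n^*,q^*)$ equal to $(1,0,0)$, $(0,1,0)$, $(0,0,1)$, i.e.\ all admissible triples (for $r=1$ part (1) the SP is $\Sigma_{1,1,1}$, with the unique trivial order). For general $r$ I would realize each admissible triple $(m^*,n^*,q^*)$ directly, by a hyperbolic polynomial whose roots sit on two scales: a ``separated'' block placed at widely spaced powers of a large parameter $t$, which behaves like iterated front/rear concatenation and supplies the bulk of $m^*$, $q^*$ and of the two outer $(+)$-blocks; and a ``clustered'' block of roots of comparable magnitude straddling $\beta$ and $\alpha$, arranged so that the block of negative coefficients in the middle has length exactly $n\in\{r,r+1\}$. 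The sign pattern of such a product is then checked by a dominant-monomial analysis in $t$ together with a perturbation estimate for the clustered part; the windows $r^2<\delta<(r+1)^2$ are precisely the range of degrees in which a clustered block of the size needed to force an $n$-long middle minus-block can be accommodated, while $\delta=r^2$ is the borderline at which the palindromic SP is impossible (its $m=q=(\delta-r+1)/2$ would not be an integer), which is why part (3) carries the two asymmetric SPs.

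The heart of the matter — and the main obstacle — is this sign verification for the clustered construction: proving that the coefficients realize $\Sigma_{m,n,q}$ exactly, with no extra sign change slipping in, and, correlatively, that the construction really covers the \emph{full} claimed range, so that no hidden obstruction forces $m^*$ or $q^*$ strictly above $\tau_1$, $\tau_2$ or confines $n^*$ to a subinterval. (By Theorem~\ref{tm2} these SPs are non-canonical, so \emph{some} non-canonical orders must exist; the content is that \emph{all} orders compatible with the concatenation bounds occur.) A secondary point is the arithmetic of the windows: that for each $\delta$ with $r^2<\delta<(r+1)^2$ exactly one of $n=r$, $n=r+1$ is compatible with $m=q$ integral, that $\delta=r^2$ admits only the two SPs of part (3), and that peeling off the $\tau_i$ shifts the bounds to $m^*\ge m-(\delta-r+1)/2$, $q^*\ge q-(\delta-r+1)/2$ (resp.\ with $(\delta-r)/2$) as stated. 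I would therefore first set up the clustered construction and its monomial bookkeeping, and then obtain the $\tau_i$-dependence and part (3) as immediate corollaries of the concatenation step.
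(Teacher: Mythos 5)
Your outer framework agrees with the paper's: both reduce to the core case $\tau_1=\tau_2=0$ and then propagate by front/rear concatenations in the sense of Remarks~\ref{rems1}, and your derivation of part~(3) from the degree-$(r^2-1)$ core of part~(2) by one extra front- (resp.\ rear-) concatenation is a legitimate and arguably cleaner variant of the paper's argument (which instead multiplies $P_{r^2-3}:=(x-1)^2(x+1)^{r^2-3}$ by $x+1\pm\varepsilon$ and exploits the two vanishing coefficients of $(x+1)P_{r^2-3}$). The genuine gap is the core case itself, which you explicitly leave as ``the main obstacle'': you must exhibit, for the palindromic SP of degree $\delta$, a HP realizing \emph{every} order of the $\delta-2$ negative and $2$ positive moduli. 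The paper's device here is the single polynomial $P_\ell(x)=(x-1)^2(x+1)^\ell$ with $\ell=\delta-2$: Lemma~\ref{lm1} computes its sign pattern exactly (the coefficient $c_j=\binom{\ell}{j}-2\binom{\ell}{j-1}+\binom{\ell}{j-2}$ changes sign where the quadratic $4j^2-4(\ell+2)j+(\ell+1)(\ell+2)$ does, which is what produces the windows $r^2<\delta<(r+1)^2$), and since \emph{all} $\delta$ roots sit at $\pm1$, an arbitrarily small perturbation realizes any prescribed order of the moduli while, by continuity, preserving every coefficient sign. Your proposed two-scale construction cannot replace this: in the core case a ``separated'' block is inadmissible (to realize, e.g., $n^*=\delta-2$ every negative modulus must lie between $\beta$ and $\alpha$, so all roots must be clustered), and the sign verification for the clustered block --- precisely the content of Lemma~\ref{lm1} --- is the part you have not supplied.

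Two smaller corrections. First, your explanation of why $\delta=r^2$ is exceptional is off: the part~(2) value $m=q=(\delta-r)/2=r(r-1)/2$ \emph{is} an integer there; the true obstruction is that $(x-1)^2(x+1)^{r^2-2}$ has two vanishing coefficients (part~(3) of Lemma~\ref{lm1}), so the clustered polynomial has no well-defined SP and perturbations tip it into one of the two asymmetric SPs of part~(3). Second, for $r=1$ the Notation imposes $r\geq 2$, so your $\Sigma_{1,2,1}$ base case is outside the theorem's scope and does not help start an induction --- nor is any induction on $r$ available without the explicit core construction.
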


\begin{rems}
  {\rm (1) Consider the case $\tau _1=\tau _2=0$. Hence $d=\delta$ and
    all possible orders of the moduli of the $d-2$ negative and $2$
    positive roots are realizable. The number of these orders is}

  $$\sum _{k=0}^{d-2}~\sum _{j=0}^{d-2-k}1~=~\sum _{k=0}^{d-2}(d-1-k)~=~d(d-1)/2$$
  {\rm (here $k$ and $j$ are the numbers of moduli of negative roots
    larger than $\alpha$ and between $\beta$ and $\alpha$ respectively).
    At the same time $d\sim r^2$, i.e. $d\sim n^2$.
    Thus the theorem guarantees the
    possibility to realize the SP $\Sigma _{m,n,q}$ by $\sim n^4/2$ HPs with 
    different orders of the moduli of their roots when $m$ and $q$ are (almost)
    equal. The latter condition is essential -- for $q=1$, the number of
    different orders is $\sim 2n$, see Theorem~4 in~\cite{Koarxiv}.
    \vspace{1mm}
    
    (2) The theorem gives only sufficient conditions for realizability of
    certain SPs with two sign changes by HPs with different orders of the
    moduli of their roots. It would be interesting to obtain necessary
    conditions as well.}
  \end{rems}

In order to prove the theorem we need a technical lemma.

\begin{nota}
  {\rm We set $P_{\ell}(x):=(x-1)^2(x+1)^{\ell}$, $\ell \geq 2$. This polynomial
    contains either $0$ or $2$ vanishing coefficients, see Lemma~\ref{lm1}. 
    By $\Sigma (\ell)$ we
    denote its SP which, in the case when there are $2$ vanishing coefficients,
    we represent in the form $(v,0,n,0,w)$. This means
    that $\Sigma (\ell)$ begins with $v=m-1$ signs~$(+)$ followed by a zero
    followed by $n=n(\Sigma (\ell ))$ signs~$(-)$ followed by a zero followed
    by $w=q-1$ signs~$(+)$. If there are no vanishing coefficients, then we
  write $\Sigma (\ell)=(v,n,w)$ in which case $v=m$ and $w=q$.}
\end{nota}

\begin{lm}\label{lm1}  
  (1) For~~$r^2-2~<~\ell ~<~(r+1)^2-2$~~and~~$\ell -r~\in ~2\mathbb{Z}+1$,
  
  $$\Sigma (\ell )~=~(~(\ell -r+3)/2~,~r~,~(\ell -r+3)/2)~)~,$$
  so~~$n(\Sigma (\ell ))=r$.
  \vspace{1mm}
  
  (2) For~~$r^2-2~<~\ell ~<~(r+1)^2-2$~~and~~$\ell -r~\in ~2\mathbb{Z}$, 

  $$\Sigma (\ell )~=~(~(\ell -r+2)/2~,~r+1~,~(\ell -r+2)/2~)~,$$
  so~~$n(\Sigma (\ell ))~=~r+1$.
  \vspace{1mm}
  
  (3) For~~$\ell ~=~r^2-2$, the~~SP~~$\Sigma (\ell )$~~equals

  $$\Sigma (\ell )~=~(~r(r-1)/2~,~0~,~r-1~,~0~,~r(r-1)/2~)~.$$
  Hence~~$n(\Sigma (\ell ))~=~r-1$.
  
\end{lm}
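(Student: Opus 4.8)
The plan is to compute the coefficients of $P_\ell(x)=(x-1)^2(x+1)^\ell$ explicitly and determine their signs. Writing $(x-1)^2=x^2-2x+1$ and $(x+1)^\ell=\sum_{k=0}^\ell\binom{\ell}{k}x^k$, the coefficient of $x^j$ in $P_\ell$ is
$$c_j~=~\binom{\ell}{j-2}~-~2\binom{\ell}{j-1}~+~\binom{\ell}{j}$$
(with the usual convention that $\binom{\ell}{k}=0$ for $k<0$ or $k>\ell$). The problem then reduces to a careful sign analysis of this second-difference-type expression as $j$ runs from $0$ to $\ell+2$. First I would observe the symmetry: since $(x-1)^2(x+1)^\ell$ is, up to the factor $(x+1)^{\ell-2}$ absorbed appropriately, not palindromic in general, but in fact $x^{\ell+2}P_\ell(1/x)=(1-x)^2(1+x)^\ell=P_\ell(x)$, so the coefficient sequence is palindromic: $c_j=c_{\ell+2-j}$. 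This halves the work and explains why the three claimed SPs are themselves palindromic ($v=w$ in each case).

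Next I would pin down where $c_j$ changes sign. Using the identity $\binom{\ell}{j-2}-2\binom{\ell}{j-1}+\binom{\ell}{j}=\binom{\ell+2}{j}-4\binom{\ell}{j-1}\cdot\frac{?}{?}$ is messier than a direct route; instead I would factor through $\binom{\ell}{j}-\binom{\ell}{j-1}=\binom{\ell}{j-1}\frac{\ell-2j+1}{j}$ type relations, or simply write $c_j=\binom{\ell}{j}\,f(j)$ for a rational function $f$ and locate its zero. A cleaner approach: $c_j/\binom{\ell}{j}= 1-2\frac{j}{\ell-j+1}+\frac{j(j-1)}{(\ell-j+1)(\ell-j+2)}$, and clearing denominators, $c_j$ has the sign of the quadratic (in $j$, with $\ell$ a parameter)
$$g_\ell(j)~:=~(\ell-j+1)(\ell-j+2)-2j(\ell-j+1)+j(j-1).$$
Expanding, $g_\ell(j)=(\ell+1)(\ell+2)-(4\ell+6)j+4j^2+\text{(lower order in the right grouping)}$; I would carry this expansion out carefully and find its two roots $j_{\pm}=\frac{(2\ell+3)\pm\sqrt{2\ell+3}}{2}$ or similar, symmetric about $j=(\ell+2)/2$ as the palindromy demands. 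The sign pattern of $(c_0,c_1,\dots,c_{\ell+2})$ is then: positive for $j<j_-$, negative for $j_-<j<j_+$, positive for $j>j_+$ — giving exactly the $\Sigma_{v,n,w}$ shape — and the coefficient vanishes precisely when $j_\pm$ is an integer, which happens iff $\sqrt{2\ell+3}$ (or whatever the discriminant turns out to be) is an odd integer, i.e. at the boundary cases corresponding to $\ell=r^2-2$.

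Finally I would match the counts. With $j_{\pm}$ located, $n(\Sigma(\ell))$ is the number of integers strictly between $j_-$ and $j_+$, i.e. essentially $\lceil j_+\rceil-\lfloor j_-\rfloor-1$, and $v$ (resp. $w$) is the number of integers in $[0,j_-)$ (resp. $(j_+,\ell+2]$). Translating the parity condition $\ell-r\in 2\mathbb{Z}+1$ versus $2\mathbb{Z}$ into whether $j_\pm$ lands on an integer or strictly between two integers, and plugging $r^2-2<\ell<(r+1)^2-2$ to control $\lfloor\sqrt{\cdot}\rfloor$, should reproduce $(\tfrac{\ell-r+3}{2},r,\tfrac{\ell-r+3}{2})$, $(\tfrac{\ell-r+2}{2},r+1,\tfrac{\ell-r+2}{2})$, and the degenerate $(\tfrac{r(r-1)}{2},0,r-1,0,\tfrac{r(r-1)}{2})$ respectively; a consistency check is that $v+n+w$ (plus the number of zeros) must equal $\ell+3$. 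The main obstacle I anticipate is purely bookkeeping: getting the discriminant and the two roots $j_\pm$ exactly right, and then handling the floor/ceiling arithmetic over the two parity cases and the boundary $\ell=r^2-2$ without off-by-one errors — the underlying structure (a palindromic sign vector $+\dots+\,-\dots-\,+\dots+$ coming from a quadratic with two real roots) is transparent, but aligning it precisely with the stated formulas in terms of $r$ requires care.
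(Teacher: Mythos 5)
Your plan is essentially the paper's own proof: both reduce the sign of $c_j=\binom{\ell}{j}-2\binom{\ell}{j-1}+\binom{\ell}{j-2}$ to the sign of a quadratic in $j$, locate its two roots, and count the integers falling in the three resulting intervals according to the parity of $\ell-r$ and the position of $\sqrt{\ell+2}$ between $r$ and $r+1$. Two of your placeholder computations need correcting when you carry them out: clearing denominators in $1-\tfrac{2j}{\ell-j+1}+\tfrac{j(j-1)}{(\ell-j+1)(\ell-j+2)}$ gives the middle term $-2j(\ell-j+2)$, not $-2j(\ell-j+1)$, so that $c_j=\tfrac{\ell!}{j!\,(\ell-j+2)!}\bigl(4j^2-4(\ell+2)j+(\ell+1)(\ell+2)\bigr)$ with roots $j_\pm=\bigl(\ell+2\pm\sqrt{\ell+2}\bigr)/2$ (correctly symmetric about $(\ell+2)/2$), rather than your guessed $\bigl(2\ell+3\pm\sqrt{2\ell+3}\bigr)/2$; and at $\ell=r^2-2$ both $j_\pm=(r^2\pm r)/2$ are automatically integers (no additional oddness condition on the square root is needed, since $r^2\pm r$ is always even). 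With those fixes the remaining floor/parity bookkeeping goes through exactly as you describe and matches the paper.
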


\begin{proof}
  Clearly $P_{\ell}=\sum _{j=0}^{\ell +2}c_jx^j$, where
  $c_j={\ell \choose j}-2{\ell \choose j-1}+{\ell \choose j-2}$. The condition
  $c_j=0$ is equivalent to

  $$4j^2-4(\ell +2)j+(\ell +1)(\ell +2)~=~0$$
  which yields
  $$j~=~j_{\pm}(\ell )~:=~(\ell +2\pm \sqrt{\ell +2})/2~.$$
  For $\ell =r^2-2$, one gets $j=(r^2\pm r)/2$ from which part (3) follows
  (both numbers $(r^2\pm r)/2$ are natural).

  When $\ell$ is not of the form $r^2-2$ the condition $c_j=0$ does not provide
  a natural solution. Hence no coefficient of $P_{\ell}$ vanishes. The formula
  expressing $j_{\pm}(\ell )$ implies that $c_j>0$ for
  $j\leq [(\ell +2-(r+1))/2]$ while $c_{[(\ell +2-(r+1))/2]+1}<0$; here $[.]$
  stands for the integer part of. If $\ell$ and $r$
  are of different parity, then

  $$[(\ell +2-(r+1))/2]~=~(\ell -r+1)/2$$
  which proves part (1). If $\ell$ and $r$ are of one and the same parity,
  then
  $$[(\ell +2-(r+1))/2]~=~(\ell -r)/2$$
  which proves part (2).

\end{proof}

\begin{proof}[Proof of Theorem~\ref{tm3}]
  To prove parts (1) and (2) of Theorem~\ref{tm3} we use parts (1) and (2) of
  Lemma~\ref{lm1} respectively. We consider first the case
  $\tau _1=\tau _2=0$. In this
  case the conditions

  $$\begin{array}{lllll}
    m~\geq ~(\delta -r+1)/2~,&q~\geq ~(\delta -r+1)/2&{\rm and}&n~=~r&
  {\rm from~part~(2)~or}\\ \\ m~\geq ~(\delta -r)/2~,&
  q~\geq ~(\delta -r)/2&{\rm and}&n~=~r+1&{\rm from~part~(3)}\end{array}$$
  are possible only if

$$\begin{array}{lllll}
    m~=~(\delta -r+1)/2~,&q~=~(\delta -r+1)/2&{\rm and}&n~=~r&{\rm or}\\ \\
    m~=~(\delta -r)/2~,&q~=~(\delta -r)/2&{\rm and}&n~=~r+1&\end{array}$$
  respectively, because $m+n+q=\delta +1$.

  Set $d=\delta :=\ell +2$. We deform the polynomial
  $P_{\ell}$ corresponding to part (1) or (2) of Lemma~\ref{lm1} so that the moduli of the roots
  are all distinct and define any possible order (fixed in advance)
  on the positive half-axis. The positive
  roots $\beta <\alpha$ of the deformed polynomial
  (denoted by $\tilde{P}_{\ell}$)
  remain close to $1$ and
  the $\ell$ negative roots remain close to $-1$. Hence the signs of
  the coefficients of $\tilde{P}_{\ell}$ are the same as the signs of the
  coefficients of $P_{\ell}$ and

  $$\begin{array}{ll}
    \sigma (\tilde{P}_{\ell})~=~(~(\delta -r+1)/2~,~r~,~(\delta -r+1)/2~)&
           {\rm in~the~case~of~part~(2)~or}\\ \\
           \sigma (\tilde{P}_{\ell})~=~(~(\delta -r)/2~,~r+1~,~(\delta -r)/2~)&
                  {\rm in~the~case~of~part~(3).}
  \end{array}$$
  This proves the theorem for $\tau _1=\tau _2=0$.

  In the general case, i.e.
  for $\tau _1\geq 0$ and $\tau _2\geq 0$, one first constructs the polynomial
  $\tilde{P}_{\ell}$ as above. Then one performs $\tau _1$ concatenations of
  $\tilde{P}_{\ell}$ with polynomials of the form $1+\varepsilon _jx$, $j=1$,
  $\ldots$, $\tau _1$, as
  explained in part (2) of Remarks~\ref{rems1}, where

  $$0~<~\varepsilon _{\tau _1}~\ll ~\varepsilon _{\tau _1-1}~\ll ~\cdots ~\ll ~
  \varepsilon _1~\ll ~1~.$$
  This adds $\tau _1$ negative roots whose moduli are larger than~$\alpha$.
  After this one performs $\tau _2$ concatenations, see part (1) of
  Remarks~\ref{rems1}, with polynomials of the form $x+\varepsilon _j$,
  $j=\tau _1+1$, $\ldots$, $\tau _1+\tau _2$, where

  $$0~<~\varepsilon _{\tau _1+\tau _2}~\ll ~\varepsilon _{\tau _1+\tau _2-1}~\ll ~
  \cdots ~\ll ~\varepsilon _{\tau _1+1}~\ll ~\varepsilon _{\tau _1}~.$$
  This adds $\tau _2$ negative roots whose moduli are smaller than $\beta$.

  Part (3). Consider first the case $\tau _1=\tau _2=0$.
  We use Lemma~\ref{lm1} with $\ell =r^2-3$. Hence one can apply
  part (2) of Lemma~\ref{lm1} with $r-1$ substituted for $r$
  (because $\ell -(r-1)\in 2\mathbb{Z}$). This implies that
  the polynomial $P_{r^2-3}$ realizes the SP $(r(r-1)/2,r,r(r-1)/2)$. Setting
  $P_{r^2-3}:=\sum _{j=0}^da_jx^j$ one deduces that

  $$\begin{array}{llllll}
a_{r(r-1)/2}~<~0&,&a_{r(r-1)/2-1}~>~0&,&a_{r(r-1)/2}+a_{r(r-1)/2-1}~=~0&,\\ \\ 
a_{r(r+1)/2+1}~>~0&,&a_{r(r+1)/2}~<~0&,&a_{r(r+1)/2+1}+a_{r(r+1)/2}~=~0&.\end{array}$$
  The two equalities to $0$ result from the polynomial
  $P_{r^2-2}=(x+1)P_{r^2-3}$ having vanishing coefficients of $x^{r(r-1)/2}$ and
  $x^{r(r+1)/2}$, see part (3) of Lemma~\ref{lm1}. Hence for the SPs defined by
  the polynomials $P_{\pm}:=(x+1\pm \varepsilon )P_{r^2-3}$, $\varepsilon >0$,
  one has

  $$\sigma (P_+)~=~(r(r-1)/2+1,r,r(r-1)/2)~~~\, \, {\rm and}~~~\, \, 
  \sigma (P_-)~=~(r(r-1)/2,r,r(r-1)/2+1)~.$$
  Then one perturbs the roots of $P_{r^2-3}$ (the perturbed negative roots must
  keep away from the root $-1\mp \varepsilon$ of $P_{\pm}$).
  In the case of $P_+$ (resp. $P_-$) the largest (resp. the smallest)
  of the moduli of perturbed roots is the one of the negative
  root $-1-\varepsilon$ (resp. $-1+\varepsilon$) and the order of the remaining
  $d-3$ negative and $2$ positive roots can be arbitrary. This proves part (3)
  for $\tau _1=\tau _2=0$. In the general case, i.e.
  for $\tau _1\geq 0$ and $\tau _2\geq 0$, the proof is finished in the same
  way as for parts (1) and~(2).

  \end{proof}

\end{document}